\newtheorem{thm}{Theorem}
\newtheorem{definition}[thm]{\quad Definition}
\newtheorem{theorem}{Theorem}[section]
\newtheorem{lemma}[theorem]{Lemma}
\newtheorem{example}[theorem]{Example}
\newtheorem{remark}[theorem]{Remark}
\journal{....}
\begin{document}

	\begin{frontmatter}

		\title{Sixteen-dimensional Sedenion-like Associative Algebra}
		\author{\large{Jitender$^1$,\hspace{0.2cm} Shiv Datt Kumar$^2$\vspace{.4cm}}}
		 \address{\large{$^{1, 2}$Department of Mathematics\\ Motilal Nehru National Institute of Technology Allahabad,\\ Prayagraj (UP), $211004$,
			India,}\\
			 Email: $^1$jitender2020rma03@mnnit.ac.in. $^2$sdt@mnnit.ac.in.}

		\begin{abstract}
			In this article, we construct a  $16$-dimensional sedenion-like associative algebra, which is  an even subalgebra of $2^5$-dimensional Clifford algebra $Cl_{5,0}$. We define the norm on sedenion-like algebra and show that its sixteen-dimensional elements preserves the norm relation $\lVert ST \rVert=\lVert S \rVert \lVert T \rVert$ under the condition $S_rS_d^\dagger + S_r^\dagger S_d=0$, where $S_r,~S_d$ denote the real and dual part of an octonion-like number $S$ respectively and $S^\dagger$ is the transpose of $S$. The elements of this sedenion-like algebra can be written as dual octonion like numbers called split bioctonion-like algebra and $S S^\dagger$ is commutative [i.e. $S S^\dagger=S^\dagger S $ and $(S S^\dagger) T=T(S S^\dagger )$], for any two octonion-like/sedenion-like numbers $S$ and $T$. We define the operations coproduct $\bigtriangleup$, counit $\epsilon $ and antipode $S$ on octonion-like/sedenion-like algebra to construct the Hopf algebra structure on it. We also show that $8$-dimensional octonion-like  associative seminormed division algebra is a $\mathbb{Z}_2^4/2$-graded quasialgebra and $16$ dimensional sedenion-like algebra  is a $\mathbb{Z}_2^5/2$-graded quasialgebra.
						
		\end{abstract}
		\begin{keyword}  Division algebras, Hopf algebras, quasialgebra, graded algebra.
			
			\MSC 2020 Mathematics Subject classification : 17C60, 16T05, 11R52, 16W50.
			
		\end{keyword}

	\end{frontmatter}
	
	\section{Introduction}
	 Christian \cite{CE} introduced an eight dimensional octonion like  associative normed division algebra over $\mathbb{R}$. Well known Hurwitz theorem states that  $\mathbb{R}$, $\mathbb{C}$, $\mathbb{H}$ and $\mathbb{O}$ are the only four normed division algebras over $\mathbb{R}$. Khalid and Bouchard \cite{BO} noted that octonion like algebra is a seminormed division algebra, i.e. division by any number $X$ in octonion like algebra  is possible if and only if $\lVert X \rVert \neq 0$. This octonion like algebra is different from the algebra of octonions $\mathbb{O}$ because it is associative and  has six imaginary and two real units where as the algebra of octonions $\mathbb{O}$ is non-associative and has seven imaginary and one real units.  
	 Jordan attempted to use the algebra of octonions to transfer the probabilistic interpretation of quantum theory, which is not successful due to  non-associativity of the octonions.  Hopf algebra \cite{SH} was introduced by Hopf which is related to the concept of $H$-space in Algebraic Topology. To see nonassociative algebras as an  associative algebra, Albuquerque and Majid \cite{AC} introduced the group graded quasialgebras, which are algebraic structures on the direct sum of homology and cohomology groups on an $H$-space in algebraic topology.  Linear Gr-categories \cite{HB} is the category of finite group graded vector space, which has applications in tensor categories \cite{BC}, quantum groups \cite{KQ}, cohomology of groups and  representation theory \cite{LA3}. Balodi et al. \cite{BD} proved that every expression in a G-graded quasialgebra can be reduced to a unique irreducible form and the irreducible words form a basis for the quasi-algebra, known as Diamond lemma.  
	 
	\justifying In this article, we generalize the octonion-like algebra and introduce  a  16 dimensional sedenion-like algebra which is an even subalgebra of $2^5$-dimensional Clifford algebra $Cl_{5,0}$ \cite{AC}. This algebra is different from the algebra of sedenion as it is associative and has six imaginary and ten real units, while the algebra of sedenions has one real and fifteen imaginary units. We split the 16-dimensional sedenion number as dual of octonion like numbers and define norm on it. Note that this algebra is a normed algebra but not a normed division algebra so it does not contradict the Hurwitz's theorem. We also prove that the sedenion-like elements $S,T$ preserves the condition $\lVert ST \rVert=\lVert S \rVert \lVert T \rVert$ and $S S^\dagger$ is commutative [i.e. $S S^\dagger=S^\dagger S$ and $(S S^\dagger) T=T(S S^\dagger )$]. Also, it does not give the $16$ square formula because in sedenion-like algebra we define the norm by using the condition $S_rS_d^\dagger+S_dS_r^\dagger=0$, which implies that  $s_0s_{15}=\sum_{i=1}^{i=7}s_is_{15-i}$ and $s_0s_8=\sum_{i=1}^{i=7} s_is_{8+i}$, where $S=s_0~+s_1\lambda e_oe_1~+s_2\lambda e_oe_2+s_3\lambda e_oe_3+s_4\lambda e_1e_2+s_5\lambda e_3e_1+s_6\lambda e_2e_3+s_7\lambda e_oe_1e_2e_3+s_8\lambda e_oe_4+s_9\lambda e_1e_4+s_{10}\lambda e_2e_4+s_{11}\lambda e_3e_4+s_{12}\lambda e_oe_2e_1e_4+s_{13}\lambda e_oe_1e_3e_4+s_{14}\lambda e_oe_3e_2e_4 +~s_{15}\lambda e_1e_2e_3e_4$ is a sedenion-like number. Hence all coefficients are algebraically dependent . Associativity of sedenion-like algebra is an important property for the physical application of algebras. Jordan attempted to use the algebra of octonions and sedenions to transfer the probabilistic interpretation of quantum theory in dimension 8 and 16 respectively, which  was not successful due to non-associativity of the octonions and sedenions. Due to the associativity of octonion-like and sedenion-like algebras, these can be used to transfer the probabilistic interpretation of the quantum theory in Jordan problem.

	The outline of the article is as follows: In section 2, an overview of $8$-dimensional octonion like associative algebra and their properties are presented. Sedenion like algebra and its properties are given in section 3. In section 4, norm on sedenion-like algebra is defined and proved that the given norm respects the condition  $\lVert XY \rVert=\lVert X \rVert \lVert Y \rVert$. Hopf algebra structure on sedenion-like/octonion-like algebras is given in Section 5, while Section 5 gives  quasialgebra structure on sedenion-like/octonion-like algebras.
	
	Throughout this article, $K$ denotes any field of characteristic zero, $K^*=K-\lbrace 0 \rbrace$, $\mathbb{O}^l$ is the octonion like associative seminormed division algebra, $\mathbb{S}^l$ is the sedenion-like associative normed algebra. 
	\section{Octonion-like Algebra}
	An octonion like algebra $\mathbb{O}^l$\cite{KO} is a seminormed division algebra over $\mathbb{R}$ (i.e. division by any $X\in \mathbb{O}^l$ is possible if $\lVert X\rVert \neq 0$ ), which is an even subalgebra of Clifford algebra $Cl_{4,0}$, i.e.
	\begin{center}
		$\mathbb{O}^l=span\lbrace 1,~ \lambda e_xe_y,~ \lambda e_ze_x,~ \lambda e_ye_z,~ \lambda e_xe_{\infty},~ \lambda e_ye_{\infty},~ \lambda e_ze_{\infty},~ \lambda e_xe_ye_ze_{\infty}\rbrace $
	\end{center}
The vectors $\lbrace e_x,~ e_y,~ e_z, ~e_{\infty}\rbrace $ form an orthonormal set of vectors which satisfy $e_i^2=1$ and $e_ie_j=-e_je_i$ ,~ for  $ i\neq j=\lbrace x,~y,~z,~{\infty}\rbrace$,  with the orientation $\lambda = 1$ or $\lambda= -1$. For simplicity writing $\lbrace 1,\lambda e_xe_y,\lambda e_ze_x,~ \lambda e_ye_z,~ \lambda e_xe_{\infty},~ \lambda e_ye_{\infty},~ \lambda e_ze_{\infty}, \lambda e_xe_ye_ze_{\infty}\rbrace$ as $\lbrace 1=u_0,\hspace{0.1cm} u_1, u_2,\hspace{0.1cm} u_3, u_4,\hspace{0.1cm} u_5,\hspace{0.1cm} u_6,\hspace{0.1cm} u_7\rbrace$ satisfying $u_i^2=1$, for $i=0, 7$ and $u_i^2=-1$ for $i=1, 2, 3, 4, 5, 6$ which implies that there are two real and six imaginary units, unlike the algebra of octonions, which has one real and seven imaginary units. The complete multiplication table is given by the following table:
 \begin{center}
	\begin{TAB}(r,0.4cm,0.4cm)[4pt]{|c|c|c|c|c|c|c|c|c|}{|c|c|c|c|c|c|c|c|c|}% (rows,min,max)[tabcolsep]{columns}{rows}
	*&1& $u_1$ & $u_2$ & $u_3$ & $u_4$ & $u_5$ & $u_6$ & $u_7$\\
	
	1&1& $u_1$ & $u_2$ & $u_3$ & $u_4$ & $u_5$ & $u_6$ & $u_7$\\
	
	$u_1$&$u_1$& -$1$  & $u_3$  & $-u_2$ & $-u_5$ & $u_4$ & $u_7$ & $-u_6$ \\
	
	$u_2$& $u_2$& $-u_3$  & -$1$  & $u_1$ & $u_6$ & $u_7$ & $-u_4$ & $-u_5$  \\
	
	$u_3$& $u_3$& $u_2$  & $-u_1$  & $-1$ & $u_7$ & $-u_6$ & $u_5$ & $-u_4$ \\
	
	$u_4$& $u_4$& $u_5$  & $-u_6$  & $u_7$ & $-1$ & $-u_1$ & $u_2$ & $-u_3$ \\
	
	$u_5$& $u_5$& $-u_4$  & $u_7$  & $u_6$ & $u_1$ & $-1$ & $-u_3$ & $-u_2$ \\
	
	$u_6$& $u_6$& $u_7$  & $u_4$  & $-u_5$ & $-u_2$ & $u_3$ & $-1$ & $-u_1$ \\
	
	$u_7$& $u_7$& $-u_6$  & $-u_5$  & $-u_4$ & $-u_3$ & $-u_2$ & $-u_1$ & $1$ \\
\end{TAB}
\end{center}
The above multiplication table can also be obtained by the product $*$ defined by: 
\begin{center}
	$e_i*e_j=e_i\cdot e_j+e_i\wedge e_j.$
\end{center}
	This implies that $e_i\cdot e_j=\frac{1}{2}\lbrace e_i*e_j+e_j* e_i\rbrace$ and 
	$e_i\wedge e_j=\frac{1}{2}\lbrace e_i*e_j-e_j* e_i\rbrace$,
 where $\cdot$ is the scalar product and $\wedge$ is the wedge product. With unit vector $e_{\infty}$ the octonion-like algebra is an even subalgebra of sixteen dimensional Clifford algebra \cite{YO}.  So, this new algebra, which is different from the algebra of octonion is called as octonion-like algebra. Also octonion-like algebra is associative and noncommutative algebra. The octonion like associative algebra $\mathbb{O}^l$  is same as split bi-quaternion algebra (i.e. sum of two quaternion algebras) which can be presented as pair of quaternions same as the algebra of complex numbers can be written pair of real numbers. An element $X\in \mathbb{O}^l$ is written as the linear sum of all basis elements of $\mathbb{O}^l$, i.e. 
	\begin{center} 
		$X=x_ou_0+x_1u_1+x_2u_2+x_3u_3+x_4u_4+x_5u_5+x_6u_6+x_7u_7$
	\end{center}
 and $X^\dagger$ of an element $X\in \mathbb{O}^l$ is defined by changing the sign of coefficients of an  imaginary basis, i.e. 
	\begin{center}
		$X^\dagger=x_ou_0-x_1u_1-x_2u_2-x_3u_3-x_4u_4-x_5u_5-x_6u_6+x_7u_7$
	\end{center}
	Let $X, Y\in \mathbb{O}^l$, where
	\begin{center}
		$X=x_ou_0+x_1u_1+x_2u_2+x_3u_3+x_4u_4+x_5u_5+x_6u_6+x_7u_7$ and\\\vspace{0.2cm} $Y=y_ou_0+y_1u_1+y_2u_2+y_3u_3+y_4u_4+y_5u_5+y_6u_6+y_7u_7$. \\\vspace{0.2cm}
\hspace{-2cm}	Then \hspace{0.2cm}	$Z=XY=(x_ou_0+x_1u_1+x_2u_2+x_3u_3+x_4u_4+x_5u_5+x_6u_6+x_7u_7)$\vspace{0.2cm}\\\hspace{0.6cm}$(y_ou_0+y_1u_1+y_2u_2+y_3u_3+y_4u_4+y_5u_5+y_6u_6+y_7u_7)$\\\vspace{0.2cm}
		
		\hspace{1cm}	 $=
		(x_0y_0-x_1y_1-x_2y_2-x_3y_3-x_4y_4-x_5y_5-x_6y_6+x_7y_7)u_0$\vspace{0.2cm}\\
		\hspace{1cm}		 $+(x_1y_0+x_0y_1-x_3y_2+x_2y_3+x_5y_4-x_4y_5-x_7y_6-x_6y_7)u_1$\vspace{0.2cm}\\
		\hspace{1cm}	  $+(x_2y_0+x_3y_1+x_0y_2-x_1y_3-x_6y_4-x_7y_5+x_4y_6-x_5y_7)u_2$\vspace{0.2cm}\\
		\hspace{1cm}	   $+(x_3y_0-x_2y_1+x_1y_2+x_0y_3-x_7y_4+x_6y_5-x_5y_6-x_4y_7)u_3$\vspace{0.2cm}\\
		\hspace{1.2cm}	    $+(x_4y_0-x_5y_1+x_6y_2-x_7y_3+x_0y_4+x_1y_5-x_2y_6-x_3y_7)u_4$\vspace{0.2cm}\\
		\hspace{1cm}	     $+(x_5y_0+x_4y_1-x_7y_2-x_6y_3-x_1y_4+x_0y_5+x_3y_6-x_2y_7)u_5$\vspace{0.2cm}\\
		\hspace{1cm}	      $+(x_6y_0-x_7y_1-x_4y_2+x_5y_3+x_2y_4-x_3y_5+x_0y_6-x_1y_7)u_6$\vspace{0.2cm}\\
		\hspace{1cm}	       $+(x_7y_0+x_6y_1+x_5y_2+x_4y_3+x_3y_4+x_2y_5+x_1y_6+x_0y_7)u_7$ \\\vspace{0.2cm} \hspace{1cm} $=z_ou_0+z_1u_1+z_2u_2+z_3u_3+z_4u_4+z_5u_5+z_6u_6+z_7u_7$.
	\end{center}
	Also, the product $Z=XY$ can be written in matrix form as $Z_r=M_xY_r$, where $M_x$ is the $8\times 8$ matrix obtained by taking all the coefficients from left side in the above multiplication, given by:
	\begin{center}
		$M_x=\begin{bmatrix} 
			x_0 & -x_1 & -x_2 & -x_3 & -x_4 & -x_5 & -x_6 & x_7\\
			x_1 & x_0 & -x_3 & x_2 & x_5 & -x_4 & -x_7 & -x_6\\
			x_2 & x_3 & x_0 & -x_1 & -x_6 & -x_7 & +x_4 & -x_5\\
			x_3 &-x_2 & x_1 & x_0
			& -x_7 & x_6 & -x_5 & -x_4\\
			x_4 & -x_5& x_6 & -x_7 & x_0 & x_1 & -x_2 & -x_3\\
			x_5 & x_4 & -x_7 & -x_6 & -x_1 & +x_0 & +x_3 & -x_2\\
			x_6 & -x_7 & -x_4 & +x_5 & +x_2 & -x_3 & +x_0 & -x_1\\
			x_7 & x_6 & x_5 & x_4 & x_3 & x_2 & x_1 & x_0
		\end{bmatrix}$ \end{center}
	and $X_r$ and $Z_r$ are the real coefficients of $X$ and $Y$ written in matrix form as:
	\begin{center}
		$Y_r=\begin{bmatrix} 
			y_0 & y_1& y_2 & y_3& y_4 & y_5& y_6 & y_7
		\end{bmatrix}^T$ \end{center}
	\begin{center} 
		$Z_r=\begin{bmatrix} 
			z_0 & z_1& z_2 & z_3& z_4 & z_5& z_6 & z_7
		\end{bmatrix}^T$.
	\end{center} 
	Also, above product $Z=XY$ can be written as $Z=M_yX_r$, with  
	\begin{center}
		$M_y=\begin{bmatrix} 
			y_0 & -y_1 & -y_2 & -y_3 & -y_4 & -y_5 & -y_6 & y_7\\
			y_1 & y_0 & y_3 & -y_2 & -y_5 & y_4 & -y_7 & -y_6\\
			y_2 & -y_3 & y_0 & y_1 & y_6 & -y_7 & -y_4 & -y_5\\
			y_3 &y_2 & -y_1 & y_0 & -y_7 & -y_6 & y_5 &-y_4\\
			y_4 & y_5& -y_6 & -y_7 & y_0 & -y_1 & y_2 & -y_3\\
			y_5 & -y_4 & -y_7 & y_6 & y_1 & y_0 & -y_3 & -y_2\\
			y_6 & -y_7 & y_4 & -y_5 & -y_2 & y_3 & y_0 & -y_1\\
			y_7 & y_6 & y_5 & y_4 & y_3 & y_2 & y_1 & y_0
		\end{bmatrix}$ \end{center}
	and
	\begin{center}
		$X_r=\begin{bmatrix} 
			x_0 & x_1& x_2 & x_3& x_4 & x_5& x_6 & x_7
		\end{bmatrix}^T$. \end{center}
	In \cite{KO}, Khalid and Bouchard defined two seminorms $\lVert X\rVert_1$ and $\lVert X\rVert_2$  on octonion like algebra corresponding to $\lambda=1 $ and $\lambda=-1$ respectively given as follows:
	\begin{center}
		$\lVert X\rVert^2 =x_0x_0+x_1x_1+x_2x_2+x_3x_3+x_4x_4+x_5x_5+x_6x_6+x_7x_7+\lambda x_7x_0$\\\hspace{5cm}$-\lambda x_6x_1-\lambda x_5x_2-\lambda x_4x_3-\lambda x_3x_4-\lambda x_2x_5-\lambda x_1x_6+\lambda x_0x_7$
	\end{center}
		\[\hspace{-3.5cm}=\sum_{i=0}^{7}x_i^2-2\lambda\sum_{i=1}^{3}x_1x_{7-i}+2\lambda x_0x_7.\]
		The above $\lVert X\rVert_1$ and $\lVert X\rVert_2$ are seminorms but not norms as $\lVert 1-e_7\rVert_1=0$ but $ 1  - e_7 \textdoublebarslash 0 $, similarly  $\lVert 1+e_7\rVert_2 = 0$ but $1+e_7\neq 0$. In Theorem 1, we prove that $\mathbb{O}^l$ is seminormed division algebra and Theorem 2 proves that $XX^\dagger$ is commutative in $\mathbb{O}^l$.

	\begin{thm}
		An element $X\in \mathbb{O}^l$ has inverse in $\mathbb{O}^l$ if $\lVert X \rVert_1\neq 0$ and $\lVert X \rVert_2\neq 0$.
	\end{thm}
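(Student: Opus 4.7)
The strategy is to show that $XX^\dagger$ always lies in the two-dimensional subalgebra $\mathbb{R}\langle u_0,u_7\rangle$ and to exploit the fact that $u_7$ is central with $u_7^2=u_0$. From this one reads off the two seminorms as the two ``components'' of $XX^\dagger$ and can invert $X$ explicitly whenever both are nonzero.

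\textbf{Step 1: computing $XX^\dagger$.} First, I would carry out the product $XX^\dagger$ using the multiplication table (equivalently, the matrix $M_x$ with $Y_r$ replaced by the column of coefficients of $X^\dagger$). Since $X^\dagger$ differs from $X$ only by sign-flipping the coefficients of $u_1,\ldots,u_6$ while preserving those of $u_0$ and $u_7$, direct computation gives
\[
(XX^\dagger)_0 = \sum_{i=0}^{7} x_i^{2}, \qquad
(XX^\dagger)_7 = 2\bigl(x_0x_7 - x_1x_6 - x_2x_5 - x_3x_4\bigr),
\]
while every other component vanishes by pairwise cancellation of the cross terms coming from the rows of $M_x$. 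The routine but essential check here is that the six mixed components really do cancel; this is the one place where a small calculation cannot be avoided. Setting $a=\sum_i x_i^2$ and $b=2(x_0x_7-x_1x_6-x_2x_5-x_3x_4)$, one obtains
\[
XX^\dagger = a\,u_0 + b\,u_7.
\]
An entirely analogous calculation (or, alternatively, the observation that $\dagger$ is an anti-involution) shows $X^\dagger X=XX^\dagger$.

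\textbf{Step 2: centrality of $u_7$ and inversion of $a+bu_7$.} A glance at the multiplication table shows $u_7u_i=u_iu_7$ for every $i$, so $u_7$ lies in the centre of $\mathbb{O}^l$; in particular $XX^\dagger$ is central. Because $u_7^2=u_0$, the subalgebra $\mathbb{R}\langle u_0,u_7\rangle$ is isomorphic to $\mathbb{R}[t]/(t^2-1)\cong\mathbb{R}\oplus\mathbb{R}$, and
\[
(a+bu_7)(a-bu_7) = a^2 - b^2 = (a+b)(a-b).
\]
Next I would match this factorisation with the seminorms given in the paper: from the explicit formula one has
\[
\lVert X\rVert_1^{2} = a+b, \qquad \lVert X\rVert_2^{2} = a-b.
\]
Hence $(XX^\dagger)(a-bu_7)=\lVert X\rVert_1^{2}\lVert X\rVert_2^{2}\,u_0$, and under the hypothesis $\lVert X\rVert_1\neq 0$ and $\lVert X\rVert_2\neq 0$ the product on the right is a nonzero real number.

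\textbf{Step 3: exhibiting the inverse.} I can then define
\[
X^{-1} \;=\; \frac{1}{\lVert X\rVert_1^{2}\lVert X\rVert_2^{2}}\; X^\dagger\,(a-bu_7).
\]
Associativity of $\mathbb{O}^l$ together with $X X^\dagger=X^\dagger X$ and the centrality of $a+bu_7$ give $X X^{-1}=X^{-1}X=u_0$, so $X^{-1}\in\mathbb{O}^l$ is the required two-sided inverse.

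\textbf{Expected main obstacle.} The only substantive difficulty is the bookkeeping in Step 1: verifying that all six non-scalar, non-pseudoscalar components of $XX^\dagger$ vanish. Once this is in hand, the rest of the argument is essentially the ``$\mathbb{C}\oplus\mathbb{C}$-style'' inversion for $a+bu_7$ and is purely formal. One could bypass the hand calculation by first proving the split bi-quaternion isomorphism $\mathbb{O}^l\cong\mathbb{H}\oplus\mathbb{H}$ via the idempotents $(u_0\pm u_7)/2$ and identifying the two seminorms with the quaternion norms on the two summands, but within the present exposition the direct route above is the most self-contained.
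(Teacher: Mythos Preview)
Your proof is correct but follows a genuinely different route from the paper. The paper argues via the matrix representation: it writes $XY=1$ as $M_xY_r=(1,0,\dots,0)^T$, computes the eight eigenvalues of $M_x$ symbolically (with Matlab), and observes that their moduli are exactly $\lVert X\rVert_1$ and $\lVert X\rVert_2$, so $M_x$ is nonsingular when both seminorms are nonzero. Your argument instead works intrinsically in the algebra: you compute $XX^\dagger=a\,u_0+b\,u_7$ (which is precisely the content of the paper's Theorem~2), use the centrality of $u_7$ and $u_7^2=u_0$ to invert $a+bu_7$ as $(a-bu_7)/\bigl((a+b)(a-b)\bigr)$, and then write down $X^{-1}$ explicitly. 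Your approach is more elementary---no $8\times 8$ eigenvalue calculation or computer algebra---and is constructive, yielding a closed formula for the inverse rather than mere existence; it also folds Theorems~1 and~2 into a single argument. The paper's route, on the other hand, leverages the matrix machinery already set up in Section~2 and makes the role of the two seminorms as eigenvalue magnitudes explicit, which is conceptually pleasant even if computationally heavier.
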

	\begin{proof}
		Let $X^{-1}= Y$ exists in $ \mathbb{O}^l$. Then $X Y = 1$ or in the matrix form, as defined above, it can be written as \begin{center}
			$M_x Y_r=\begin{bmatrix} 
				1 & 0& 0 & 0& 0 & 0& 0 & 0
			\end{bmatrix}^T$.\\ 
		\end{center}
		Hence inverse of $X\in \mathbb{O}^l$ exists only if matrix $M_x$ is non-singular. Now $M_x$ is non-singular if and only if  its all eigenvalues are nonzero. We calculate the eigenvalues of $M_x$  using the following Matlab code:
		\begin{center}
	\hspace{-2.5cm}	syms 	$x_0\hspace{0.2cm} x_1 \hspace{0.2cm} x_2\hspace{0.2cm} x_3\hspace{0.2cm} x_4\hspace{0.2cm} x_5\hspace{0.2cm} x_6\hspace{0.2cm} x_7$ 
		
	$M_x$ = [$x_0\hspace{0.2cm} -x_1 \hspace{0.2cm} -x_2\hspace{0.2cm}  -x_3\hspace{0.2cm}  -x_4\hspace{0.2cm}  -x_5\hspace{0.2cm}  -x_6\hspace{0.2cm}  x_7;$\\
		$x_1\hspace{0.2cm}  x_0 \hspace{0.2cm} -x_3\hspace{0.2cm}  x_2\hspace{0.2cm}   x_5 \hspace{0.2cm} -x_4\hspace{0.2cm}  -x_7\hspace{0.2cm}  -x_6;$\\
		$x_2\hspace{0.2cm}  x_3\hspace{0.2cm}  x_0 \hspace{0.2cm} -x_1\hspace{0.2cm}  -x_6\hspace{0.2cm}  -x_7\hspace{0.2cm}   x_4\hspace{0.2cm}  -x_5;$\\
		$x_3\hspace{0.2cm} -x_2\hspace{0.2cm}  x_1\hspace{0.2cm}   x_0 \hspace{0.2cm}  -x_7\hspace{0.2cm}  x_6\hspace{0.2cm}  -x_5\hspace{0.2cm}  -x_4;$\\
		$x_4\hspace{0.2cm} -x_5\hspace{0.2cm}  x_6\hspace{0.2cm}   -x_7\hspace{0.2cm}   x_0\hspace{0.2cm}  x_1\hspace{0.2cm}  -x_2\hspace{0.2cm}  -x_3;$\\
		
		$\hspace{0.8cm}x_5\hspace{0.2cm}  x_4\hspace{0.2cm} -x_7\hspace{0.2cm}   -x_6\hspace{0.2cm}  -x_1 \hspace{0.2cm} +x_0\hspace{0.2cm}  +x_3\hspace{0.2cm} -x_2;$\\
		
		$\hspace{1cm}x_6\hspace{0.2cm} -x_7\hspace{0.2cm} -x_4 \hspace{0.2cm}  +x_5\hspace{0.2cm}  +x_2\hspace{0.2cm}  -x_3\hspace{0.2cm}  +x_0\hspace{0.2cm} -x_1;$\\
		
		$\hspace{-1.5cm}x_7\hspace{0.2cm}  x_6\hspace{0.2cm}  x_5\hspace{0.2cm}   x_4 \hspace{0.2cm}   x_3\hspace{0.2cm}  x_2\hspace{0.2cm}    x_1\hspace{0.2cm}  x_0 $]\\
		\hspace{-5.5cm}eig($M_x$), 
	\end{center}
	which are given by 
		\begin{center}
			$\lambda_0=\lambda_1=x_0+x_7+i\sqrt{-x_1^2+2x_1x_6-x_2^2+2x_2x_5-x_3^2+2x_3x_4-x_4^2-x_5^2-x_6^2}$ \\\vspace{0.2cm}	$\lambda_2=\lambda_3=x_0+x_7-i\sqrt{-x_1^2+2x_1x_6-x_2^2+2x_2x_5-x_3^2+2x_3x_4-x_4^2-x_5^2-x_6^2}$ \\\vspace{0.2cm}	$\lambda_4=\lambda_5=x_0-x_7+i\sqrt{-x_1^2-2x_1x_6-x_2^2-2x_2x_5-x_3^2-2x_3x_4-x_4^2-x_5^2-x_6^2}$ \\\vspace{0.2cm}	$\lambda_6=\lambda_7=x_0-x_7-i\sqrt{-x_1^2-2x_1x_6-x_2^2-2x_2x_5-x_3^2-2x_3x_4-x_4^2-x_5^2-x_6^2}$ 
		\end{center}
		and the magnitude of eigenvalues is given by:
			\[|\lambda_0|= |\lambda_1|= |\lambda_2|= |\lambda_3|=\sum_{i=0}^{7}x_i^2-2\sum_{i=1}^{3}x_1x_{7-i}+2 x_0x_7= \lVert X\rVert_1\] 
			\[|\lambda_4|= |\lambda_5|= |\lambda_6|= |\lambda_7|=\sum_{i=0}^{7}x_i^2+2\sum_{i=1}^{3}x_1x_{7-i}-2 x_0x_7= \lVert X\rVert_2.\]
		It is given that  $\lVert X\rVert_1 \neq 0$ and $\lVert X\rVert_2 \neq 0$. Thus the magnitude of all eigenvalues are nonzero. Hence the inverse of  $X\in \mathbb{O}^l$ exists.  
	\end{proof}
\begin{thm}
	Let $X\in\mathbb{O}^l$. Then $XX^\dagger=X^\dagger X$ and $(XX^\dagger)Y=Y(XX^\dagger)$,\hspace{0.2cm} for all $Y\in\mathbb{O}^l$. 
\end{thm}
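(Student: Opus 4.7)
The plan is to exploit a decomposition of $X$ into its central part and its imaginary part, so that the two desired identities follow from abstract properties of the center of $\mathbb{O}^l$ rather than from directly expanding a long product.

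First, I split $X = R + J$, where $R = x_0 u_0 + x_7 u_7$ collects the two ``real'' basis elements (those squaring to $+1$) and $J = \sum_{i=1}^{6} x_i u_i$ collects the six ``imaginary'' basis elements (those squaring to $-1$). Since the definition of $X^\dagger$ only flips the signs of $x_1,\dots,x_6$, we immediately get $X^\dagger = R - J$.

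Second, I claim that $R$ lies in the center $Z(\mathbb{O}^l)$. Clearly $u_0 = 1$ is central, so the claim reduces to checking that $u_7$ commutes with each $u_i$, $i=1,\dots,6$. Reading the multiplication table row $u_7$ against column $u_7$ shows $u_7 u_i = u_i u_7$ in every case (both products equal $-u_{7-i}$ for $i=1,\dots,6$). Since the center is a linear subspace, $R \in Z(\mathbb{O}^l)$, hence $RJ = JR$. Using associativity,
\begin{align*}
XX^\dagger &= (R+J)(R-J) = R^2 - RJ + JR - J^2 = R^2 - J^2,\\
X^\dagger X &= (R-J)(R+J) = R^2 + RJ - JR - J^2 = R^2 - J^2,
\end{align*}
which proves $XX^\dagger = X^\dagger X$.

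Third, for $(XX^\dagger)Y = Y(XX^\dagger)$ I will show the stronger statement that $XX^\dagger$ itself lies in the central subspace $\mathrm{span}\{1, u_7\}$. On the one hand, $R^2 = x_0^2 + x_7^2 + 2 x_0 x_7 u_7$ is clearly in $\mathrm{span}\{1,u_7\}$. On the other hand, in $J^2 = \sum_{i,j=1}^{6} x_i x_j\, u_i u_j$ the diagonal terms contribute the scalar $-\sum_{i=1}^{6} x_i^2$, while the off-diagonal terms regroup as $\sum_{i<j} x_i x_j (u_i u_j + u_j u_i)$. A direct inspection of the multiplication table shows that $u_i u_j + u_j u_i = 0$ for all pairs $1 \le i < j \le 6$ except the three ``opposite'' pairs $(1,6),(2,5),(3,4)$, each of which yields $2 u_7$. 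Hence $J^2 \in \mathrm{span}\{1,u_7\}$, so $XX^\dagger = R^2 - J^2 \in \mathrm{span}\{1,u_7\} \subseteq Z(\mathbb{O}^l)$, and the second identity follows for every $Y \in \mathbb{O}^l$.

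The only non-routine step is the two-fold inspection of the multiplication table (centrality of $u_7$ and identification of the three nonzero symmetric pairs $u_i u_j + u_j u_i$); everything else is formal manipulation once $R$ is known to be central. I expect this bookkeeping to be the main obstacle, but it is finite and entirely mechanical given the table already displayed above.
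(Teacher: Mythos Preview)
Your argument is correct. Both your proof and the paper's converge on the same key fact, namely that $XX^\dagger \in \mathrm{span}\{u_0,u_7\}$ and that this span is central, but the paths differ. The paper simply expands $XX^\dagger$ as a full $64$-term product and then observes that the coefficient of each imaginary unit $u_1,\dots,u_6$ vanishes identically, leaving only the $u_0$- and $u_7$-components; centrality of $u_0$ and $u_7$ is then read off the multiplication table. Your decomposition $X=R+J$ with $R$ central is a cleaner organizational device: it yields $XX^\dagger=X^\dagger X=R^2-J^2$ in one line (so the first identity is immediate rather than inferred from the explicit form), and reduces the table-inspection to the fifteen anticommutators $u_iu_j+u_ju_i$ for $1\le i<j\le 6$ instead of a full expansion. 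The payoff of your route is that the structure of the argument is visible and would generalize more readily; the paper's route has the minor advantage of also displaying the explicit scalar and $u_7$-coefficients of $XX^\dagger$, which feed into the seminorm formulas used elsewhere.
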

\begin{proof}
	Let  $X\in\mathbb{O}^l$. Then	\begin{center} 
		$X=x_ou_0+x_1u_1+x_2u_2+x_3u_3+x_4u_4+x_5u_5+x_6u_6+x_7u_7$,\\\vspace{0.2cm}\hspace{-0.2cm}
		 $X^\dagger=x_ou_0-x_1u_1-x_2u_2-x_3u_3-x_4u_4-x_5u_5-x_6u_6+x_7u_7$.\\\vspace{0.2cm}\hspace{-.65cm}
and	$XX^\dagger =(x_0x_0+x_1x_1+x_2x_2+x_3x_3+x_4x_4+x_5x_5+x_6x_6+x_7x_7)u_0$\vspace{0.1cm}\\
	\hspace{0.6cm}		 $+(x_1x_0-x_0x_1+x_3x_2-x_2x_3-x_5x_4+x_4x_5+x_7x_6-x_6x_7)u_1$\vspace{0.1cm}\\
	\hspace{0.6cm}	  $+(x_2x_0-x_3x_1-x_0x_2+x_1x_3+x_6x_4+x_7x_5-x_4x_6-x_5x_7)u_2$\vspace{0.1cm}\\
	\hspace{0.6cm}	   $+(x_3x_0+x_2x_1-x_1x_2-x_0x_3+x_7x_4-x_6x_5+x_5x_6-x_4x_7)u_3$\vspace{0.1cm}\\
	\hspace{0.6cm}	    $+(x_4x_0+x_5x_1-x_6x_2+x_7x_3-x_0x_4-x_1x_5+x_2x_6-x_3x_7)u_4$\vspace{0.1cm}\\
	\hspace{0.6cm}	     $+(x_5x_0-x_4x_1+x_7x_2+x_6x_3+x_1x_4-x_0x_5-x_3x_6-x_2x_7)u_5$\vspace{0.1cm}\\
	\hspace{0.6cm}	      $+(x_6x_0+x_7x_1+x_4x_2-x_5x_3-x_2x_4+x_3x_5-x_0x_6-x_1x_7)u_6$\vspace{0.1cm}\\
	\hspace{0.6cm}	       $+(x_7x_0-x_6x_1-x_5x_2-x_4x_3-x_3x_4-x_2x_5-x_1x_6+x_0x_7)u_7$
\end{center}
We note that all the coefficients of imaginary units are zero. Therefore,
	\begin{center}
		 $XX^\dagger=(x_0x_0+x_1x_1+x_2x_2+x_3x_3+x_4x_4+x_5x_5+x_6x_6+x_7x_7)$\vspace{0.1cm}\\
	\hspace{2cm} $+(x_7x_0-x_6x_1-x_5x_2-x_4x_3-x_3x_4-x_2x_5-x_1x_6+x_0x_7)u_7=X^\dagger X$
\end{center}
It is clear from the multiplication table that $u_0$ and $u_7$ commute with all units. Hence	 $(XX^\dagger)Y=Y(XX^\dagger)$.
\end{proof}

	\section{The 16-dimensional algebra $\mathbb{S}^L$}
		The sedenion like algebra $\mathbb{S}^l$ is a sixteen-dimensional even subalgebra of $2^5$-dimensional Clifford algebra $Cl_{5,0}$, i.e. 
	\begin{center}
		$\mathbb{S}^l$= span$\lbrace 1,\hspace{0.1cm} \lambda e_0e_1,\hspace{0.1cm} \lambda e_0e_2,\hspace{0.1cm} \lambda e_0e_3,\hspace{0.1cm} \lambda e_1e_2,\hspace{0.1cm} \lambda e_3e_1,\hspace{0.1cm} \lambda e_2e_3,\hspace{0.1cm} \lambda e_0e_1e_2e_3,\hspace{0.1cm} \lambda e_0e_4,$\\\vspace{0.4cm}$ \lambda e_1e_4,\hspace{0.1cm} \lambda e_2e_4,\hspace{0.1cm} \lambda e_3e_4, \hspace{0.1cm}\lambda e_0e_2e_1e_4,\hspace{0.1cm} \lambda e_0e_1e_3e_4,\hspace{0.1cm} \lambda e_0e_3e_2e_4,\hspace{0.1cm} \lambda e_1e_2e_3e_4 \rbrace $
	\end{center}
The set of vectors $\lbrace e_0,\hspace{0.1cm} e_1, \hspace{0.1cm}e_2,\hspace{0.1cm} e_3,\hspace{0.1cm} e_4\rbrace $ is orthonormal which satisfy  $e_ie_j=-e_je_i$, where  $i\neq j$, for all $i, j\in \lbrace 0,\hspace{0.1cm} 1,\hspace{0.1cm}2,\hspace{0.1cm}3,\hspace{0.1cm}4\rbrace$, with the orientation $\lambda = 1$ or $\lambda= -1$ and the geometric product is same as defined in section 2. The complete multiplication table of sedenion-like algebra is given by: 
\begin{landscape}
	
	\begin{TAB}(r,0.5cm,0.5cm)[5pt]{|c|c|c|c|c|c|c|c|c|c|c|c|c|c|c|c|}{|c|c|c|c|c|c|c|c|c|c|c|c|c|c|c|c|}% (rows,min,max)[tabcolsep]{columns}{rows}
		* & $\lambda e_{01}$ &$\lambda e_{02}$ &$\lambda e_{03}$ &$\lambda e_{12}$ &$\lambda e_{31}$ &$\lambda e_{23}$ &$\lambda e_{0123}$ &$\lambda e_{04}$ &$\lambda e_{14}$ &$\lambda e_{24}$ &$\lambda e_{34}$ &$\lambda e_{0214}$&$\lambda e_{0134}$ &$\lambda e_{0324}$ &$\lambda e_{1234}$ \\
		
		$\lambda e_{01}$ & $ -1$ & $ -e_{12}$& $ e_{31}$& $ e_{02}$& $ -e_{03}$& $ e_{0123}$& $ -e_{23}$& $ -e_{14}$& $ e_{04}$& $ -e_{0214}$& $ e_{0134}$& $ e_{24}$& $ -e_{34}$& $ e_{1234}$& $ -e_{0324}$ \\
		
		$\lambda e_{02}$ & $ e_{12}$ & $ -1$& $ -e_{23}$& $ -e_{01}$& $ e_{0123}$& $ e_{03}$& $ -e_{31}$& $ -e_{24}$& $ e_{0214}$& $ e_{04}$& $ -e_{0324}$& $ -e_{14}$& $ e_{1234}$& $ e_{34}$& $ -e_{0134}$ \\
		
		$\lambda e_{03}$ & $ -e_{31}$ & $ e_{23}$& $ -1$& $ e_{0123}$& $ e_{01}$& $ -e_{02}$& $ -e_{12}$& $ -e_{34}$& $ -e_{0134}$& $ e_{0324}$& $ e_{04}$& $ e_{1234}$& $ e_{14}$& $ -e_{24}$& $ -e_{0214}$ \\
		
		$\lambda e_{12}$ & $ -e_{02}$ & $ e_{01}$& $e_{0123}$& $ -1$& $ e_{23}$& $ -e_{31}$& $ -e_{03}$& $ -e_{0214}$& $ -e_{24}$& $ e_{14}$& $ e_{1234}$& $ e_{04}$& $ e_{0324}$& $ -e_{0134}$& $ -e_{34}$ \\
		
		$\lambda e_{31}$ & $ e_{03}$ & $ e_{0123}$& $ -e_{01}$& $ -e_{23}$& $ -1$& $ e_{12}$& $ -e_{02}$& $ -e_{0134}$& $ e_{34}$& $ e_{1234}$& $ -e_{14}$& $ -e_{0324}$& $ e_{04}$& $ e_{0214}$& $ -e_{24}$ \\
		
		$\lambda e_{23}$ & $ e_{0123}$ & $ -e_{03}$& $ e_{02}$& $ e_{31}$& $ -e_{12}$& $-1$& $ e_{01}$& $ -e_{0324}$& $ e_{1234}$& $ -e_{34}$& $ e_{24}$& $ e_{0134}$& $ -e_{0214}$& $ e_{04}$& $ -e_{14}$ \\
		
		$\lambda e_{0123}$ & $ -e_{23}$ & $ -e_{31}$& $ -e_{12}$& $ -e_{03}$& $ -e_{02}$& $ -e_{01}$& $ 1$& $ -e_{1234}$& $ -e_{0324}$& $ -e_{0134}$& $ -e_{0214}$& $ -e_{34}$& $ -e_{24}$& $ -e_{14}$& $ -e_{04}$ \\
		
		$\lambda e_{04}$ & $ e_{14}$ & $ e_{24}$& $ e_{34}$& $ -e_{0214}$& $ -e_{0134}$& $ -e_{0324}$& $ e_{1234}$& $ -1$& $ -e_{01}$& $ -e_{02}$& $ -e_{03}$& $ e_{12}$& $ e_{31}$& $ e_{23}$& $ -e_{0123}$ \\
		
		$\lambda e_{14}$ & $ -e_{04}$ & $ e_{0214}$& $ -e_{0134}$& $ e_{24}$& $ -e_{34}$& $ e_{1234}$& $ e_{0324}$& $ e_{01}$& $ -1$& $ -e_{12}$& $ e_{31}$& $ -e_{02}$& $ e_{03}$& $ -e_{0123}$& $ -e_{23}$ \\
		
		$\lambda e_{24}$ & $ -e_{0214}$ & $ -e_{04}$& $ e_{0324}$& $ -e_{14}$& $ e_{1234}$& $ e_{34}$& $ e_{0134}$& $ e_{02}$& $ e_{12}$& $ -1$& $ -e_{23}$& $ e_{01}$& $ -e_{0123}$& $ -e_{03}$& $ -e_{31}$ \\
		
		$\lambda e_{34}$ & $ e_{0134}$ & $ -e_{0324}$& $ -e_{04}$& $ e_{1234}$& $ e_{14}$& $ -e_{24}$& $ e_{0214}$& $ e_{03}$& $ -e_{31}$& $ e_{23}$& $-1$& $ -e_{0123}$& $ -e_{01}$& $ e_{02}$& $ -e_{12}$ \\
		
		$\lambda e_{0214}$ & $ e_{24}$ & $ -e_{14}$& $ -e_{1234}$& $ e_{04}$& $ e_{0324}$& $ -e_{0134}$& $ e_{34}$& $ e_{12}$& $ -e_{02}$& $ e_{01}$& $ e_{0123}$& $ 1$& $ -e_{23}$& $ e_{31}$& $ -e_{03}$ \\
		
		$\lambda e_{0134}$ & $ -e_{34}$ & $ -e_{1234}$& $ e_{14}$& $ -e_{0324}$& $ e_{04}$& $ e_{0214}$& $ e_{24}$& $ e_{31}$& $ e_{03}$& $ e_{0123}$& $ -e_{01}$& $ e_{23}$& $ 1$& $ -e_{12}$& $ -e_{02}$ \\
		
		$\lambda e_{0324}$ & $ -e_{1234}$ & $ e_{34}$& $ -e_{24}$& $ e_{0134}$& $ -e_{0214}$& $ e_{04}$& $ e_{14}$& $ e_{23}$& $ e_{0123}$& $ -e_{03}$& $ e_{02}$& $ -e_{31}$& $ e_{12}$& $ 1$& $ -e_{01}$ \\
		
		$\lambda e_{1234}$ & $ e_{0324}$ & $ e_{0134}$& $ e_{0214}$& $ -e_{34}$& $ -e_{24}$& $ -e_{14}$& $ e_{04}$& $ e_{0123}$& $ -e_{23}$& $ -e_{31}$& $ -e_{12}$& $ e_{03}$& $ e_{02}$& $ e_{01}$& $ 1$ \\
	\end{TAB}
\begin{center}
Here $e_{ij}=e_ie_j$ and $e_{ijkl}=e_ie_je_ke_l$, for  $i,j,k,l\in\lbrace 0,1,2,3,4\rbrace $. 
\end{center}
\end{landscape}
It is evident from above multiplication table that the sedenion-like algebra is an associative, noncommutative and has six $(1,\lambda e_0e_1e_2e_3,\lambda e_oe_2e_1e_4,\lambda e_oe_1e_3e_4,\lambda e_oe_3e_2e_4,$\\$\lambda e_1e_2e_3e_4)$- real unit vectors and ten ($\lambda e_oe_1,\lambda e_oe_2,\lambda e_oe_3,\lambda e_1e_2,\lambda e_3e_1,\lambda e_2e_3,\lambda e_oe_4,$\\$\lambda e_1e_4,\lambda e_2e_4,\lambda e_3e_4$) imaginary unit vectors, while the algebra of sedenion is neither associative nor commutative and has one real and fifteen imaginary unit vectors. Hence this new algebra is called as sedenion-like algebra. Now we prove that $\mathbb{S}^L$ is closed under multiplication. Let 
\begin{center}
	$S=s_0+s_1\lambda e_oe_1+s_2\lambda e_oe_2+s_3\lambda e_oe_3+s_4\lambda e_1e_2+s_5\lambda e_3e_1+s_6\lambda e_2e_3+s_7\lambda e_oe_1e_2e_3$\\\vspace{0.3cm}$+s_8\lambda e_oe_4+s_9\lambda e_1e_4+s_{10}\lambda e_2e_4+s_{11}\lambda e_3e_4+s_{12}\lambda e_oe_2e_1e_4+s_{13}\lambda e_oe_1e_3e_4$\\\vspace{0.3cm}\hspace{-6cm}$+s_{14}\lambda e_oe_3e_2e_4+s_{15}\lambda e_1e_2e_3e_4$,
\end{center}
and 
\begin{center}
	$T=t_0+t_1\lambda e_oe_1+t_2\lambda e_oe_2+t_3\lambda e_oe_3+t_4\lambda e_1e_2+t_5\lambda e_3e_1+t_6\lambda e_2e_3+t_7\lambda e_oe_1e_2e_3$\\\vspace{0.3cm}$+t_8\lambda e_oe_4+t_9\lambda e_1e_4+t_{10}\lambda e_2e_4+t_{11}\lambda e_3e_4+t_{12}\lambda e_oe_2e_1e_4+t_{13}\lambda e_oe_1e_3e_4$\\\vspace{0.3cm}\hspace{-6cm}$+t_{14}\lambda e_oe_3e_2e_4+t_{15}\lambda e_1e_2e_3e_4$.
\end{center}
It is evident from the multiplication table that $\mathbb{S}^l$ is closed under multiplication, i.e. there exists $U=ST\in \mathbb{S}^L$ such that 
	\begin{center}
 $U=ST=u_0+u_1\lambda e_oe_1+u_2\lambda e_oe_2+u_3\lambda e_oe_3+u_4\lambda e_1e_2+u_5\lambda e_3e_1+u_6\lambda e_2e_3$\\\vspace{0.3cm}$+u_7\lambda e_oe_1e_2e_3+u_8\lambda e_oe_4+u_9\lambda e_1e_4+u_{10}\lambda e_2e_4+u_{11}\lambda e_3e_4+u_{12}\lambda e_oe_2e_1e_4$\\\vspace{0.3cm}\hspace{-3.5cm}$+u_{13}\lambda e_oe_1e_3e_4+u_{14}\lambda e_oe_3e_2e_4+u_{15}\lambda e_1e_2e_3e_4$.
 \end{center}
Now we show that every $S\in \mathbb{S}^L$ can be written as dual of octonion-like numbers, i.e.
\begin{center}
	$S=S_r+S_d\epsilon$, 
\end{center}  
where $S_r$ and $S_d$ are two octonion like numbers and $\epsilon=-e_1e_2e_3e_4$, such that $\epsilon^2=1$ and $\epsilon^\dagger=\epsilon$ ($\dagger$ is the reverse operation defined in \cite{DGC}, \cite{DGP} ),
\begin{center}
	$S_r=s_0+s_1\lambda e_oe_1+s_2\lambda e_oe_2+s_3\lambda e_oe_3+s_4\lambda e_1e_2+s_5\lambda e_3e_1+s_6\lambda e_2e_3+s_7\lambda e_oe_1e_2e_3$
\end{center}
	and
	\begin{center}
		$S_d=-s_{15}+s_{14}\lambda e_oe_1+s_{13}\lambda e_oe_2+s_{12}\lambda e_oe_3+s_{11}\lambda e_1e_2+s_{10}\lambda e_3e_1+s_9\lambda e_2e_3$\\\vspace{0.2cm}\hspace{-8cm}$+s_8\lambda e_oe_1e_2e_3$.
	\end{center} 
Clearly, 
\begin{center}
	$S_d\epsilon = s_8\lambda e_oe_4+s_9\lambda e_1e_4+s_{10}\lambda e_2e_4+s_{11}\lambda e_3e_4+s_{12}\lambda e_oe_2e_1e_4+s_{13}\lambda e_oe_1e_3e_4$\\\vspace{0.3cm}\hspace{-6.5cm}$+s_{14}\lambda e_oe_3e_2e_4+s_{15}\lambda e_1e_2e_3e_4 $.
\end{center}
 
\section{Norm on sedenion-like algebra }

In this section, we define a normed subalgebra of $\mathbb{S}^L$ called as  sedenion-like algebra $\mathbb{S}^l$ and show that this norm preserves the condition  $\lVert ST \rVert=\lVert S \rVert \lVert T \rVert$ and $S\cdot S^\dagger$ is commutative [i.e. $S\cdot S^\dagger=S^\dagger\cdot S $ and $(S\cdot S^\dagger)\cdot T=T(S\cdot S^\dagger )], ~~  \forall~~ S,~T\in \mathbb{S}^l$.

\justifying Christian \cite{CE} used the norm on $\mathbb{O}^l$ as $\lVert X\rVert=\sum_{i=0}^{7}x_i^2$, which is equal to the square\vspace{0.1cm}\\ root of $XX^\dagger$ with non scalar part set to zero of the geometric product $XX^\dagger $, for all $X\in \mathbb{O}^l$, i.e. 
  \begin{equation}
	\lVert X\rVert^2=X\cdot X^\dagger.
  \end{equation}
Extend this definition of norm on $\mathbb{S}^l$. If $S\in \mathbb{S}^l$, then norm of $S=\sum_{i=0}^{15}s_i^2$, which is equal to the square root of $SS^\dagger$ with non scalar of $SS^\dagger$ set to zero. Let $S=S_r+S_d\epsilon,
  ~S^\dagger=S_r^\dagger+S_d^\dagger\epsilon $. Then 
  \begin{center}
 $SS^\dagger=(S_r+S_d\epsilon)(S_r^\dagger+S_d^\dagger\epsilon)$
 \\\vspace{0.2cm}$\hspace{-0.1cm}=(S_rS_r^\dagger+S_dS_d^\dagger)+(S_rS_d^\dagger+S_dS_r^\dagger)\epsilon$
 \\\vspace{0.2cm}$\hspace{0.3cm}=(\lVert S_r \rVert^2+\lVert S_d \rVert^2)+(S_rS_d^\dagger+S_dS_r^\dagger)\epsilon$, 
\end{center}
which implies that $\lVert S\rVert^2=\lVert S_r\rVert^2+\lVert S_d\rVert^2$, if the octonion-like numbers $S_r$ and $S_d$ are orthogonal i.e. $S_rS_d^\dagger+S_dS_r^\dagger=0$. Then  we define $\mathbb{S}^l$ as a set of paired octonion-like numbers 
\begin{center}
	$\mathbb{S}^l=\lbrace S:=S_r+S_d\epsilon \hspace{0.1cm} |\hspace{0.1cm} \lVert S\rVert^2=\lVert S_r\rVert^2+\lVert S_d\rVert^2\rbrace$.
\end{center}
\begin{example}
	Let $S=1+e_0e_1e_2e_3+e_0e_4+e_1e_2e_3e_4$. Then 
	\begin{center}
		 \hspace{-3.65cm}$S_r=S_r^\dagger=1+e_0e_1e_2e_3$,\\\vspace{0.2cm}  $ \hspace{-0.7cm}S_d=S_d^\dagger=-1+e_0e_1e_2e_3$,
	 which implies that\\\vspace{0.2cm}
	 $\hspace{-1.5cm}S_rS_d^\dagger+S_dS_r^\dagger=(-1+e_0e_1e_2e_3-e_0e_1e_2e_3+1)$\\\vspace{0.2cm}$ \hspace{1cm}+(-1-e_0e_1e_2e_3+e_0e_1e_2e_3+1)=0$.
	\end{center}
\end{example}
Now in order to use the condition	$(S_rS_d^\dagger+S_dS_r^\dagger)\epsilon=0$, we must ensure that it is closed under multiplication. The proof is given in the next Lemma 3.
\begin{lemma}
	$\mathbb{S}^l=\lbrace S:=s_e+s_d\epsilon \hspace{0.1cm} |\hspace{0.1cm} \lVert S\rVert^2=\lVert S_r\rVert^2+\lVert S_d\rVert^2\rbrace$ is closed under multiplication.
\end{lemma}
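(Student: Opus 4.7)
The plan is to take arbitrary $S, T \in \mathbb{S}^l$ with splittings $S = S_r + S_d\epsilon$, $T = T_r + T_d\epsilon$ satisfying the orthogonality relations $S_rS_d^\dagger + S_dS_r^\dagger = 0$ and $T_rT_d^\dagger + T_dT_r^\dagger = 0$, and to show that $ST$ again admits such a splitting whose real and dual octonion-like parts obey the same orthogonality condition.

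First I would carry out the product. Using $\epsilon^2 = 1$ together with the multiplication rules from Section~3, one finds
\begin{equation*}
ST \;=\; (S_r + S_d\epsilon)(T_r + T_d\epsilon) \;=\; (S_rT_r + S_dT_d) + (S_rT_d + S_dT_r)\epsilon.
\end{equation*}
Setting $U_r := S_rT_r + S_dT_d$ and $U_d := S_rT_d + S_dT_r$, closure of the associative algebra $\mathbb{O}^l$ under addition and multiplication guarantees $U_r, U_d \in \mathbb{O}^l$, so $ST$ has a dual octonion-like representation. The content of the lemma then reduces to the single identity $U_rU_d^\dagger + U_dU_r^\dagger = 0$.

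I would next expand this combination using $(AB)^\dagger = B^\dagger A^\dagger$ inside $\mathbb{O}^l$. The eight resulting summands regroup into two natural collections. The first gathers the pieces in which a $T$-combination is sandwiched between like $S$-factors; these assemble into $S_r(T_rT_d^\dagger + T_dT_r^\dagger)S_r^\dagger + S_d(T_rT_d^\dagger + T_dT_r^\dagger)S_d^\dagger$ and hence vanish by the orthogonality of $T$. The remaining four summands collect into $S_r(T_rT_r^\dagger + T_dT_d^\dagger)S_d^\dagger + S_d(T_rT_r^\dagger + T_dT_d^\dagger)S_r^\dagger$, which cannot be cancelled using the hypotheses alone.

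To dispose of this second collection I would invoke Theorem~2, which guarantees that $XX^\dagger$ lies in the center of $\mathbb{O}^l$ for every $X$. Consequently $T_rT_r^\dagger + T_dT_d^\dagger$ is central and may be pulled across $S_d^\dagger$ and $S_r^\dagger$, leaving the single expression $(S_rS_d^\dagger + S_dS_r^\dagger)(T_rT_r^\dagger + T_dT_d^\dagger)$, which is $0$ by the orthogonality hypothesis on $S$. This produces $U_rU_d^\dagger + U_dU_r^\dagger = 0$ and so $ST \in \mathbb{S}^l$.

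The main obstacle is the consolidation of the unlike-sandwich terms: without the centrality statement of Theorem~2 there is no way to factor out a common $(S_rS_d^\dagger + S_dS_r^\dagger)$ and trigger the orthogonality of $S$. Everything else is distributivity together with the $T$-orthogonality, so the lemma is essentially the centrality of $XX^\dagger$ combined with the two orthogonality hypotheses.
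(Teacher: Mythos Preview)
Your proof is correct and follows essentially the same route as the paper: compute $(ST)_r = S_rT_r + S_dT_d$, $(ST)_d = S_rT_d + S_dT_r$, expand $(ST)_r(ST)_d^\dagger + (ST)_d(ST)_r^\dagger$ into eight terms, kill four of them with the $T$-orthogonality, and eliminate the remaining four using the $S$-orthogonality after pulling $T_rT_r^\dagger$ and $T_dT_d^\dagger$ across the $S$-factors. The only difference is cosmetic: you explicitly invoke Theorem~2 (centrality of $XX^\dagger$) to justify that last commutation, whereas the paper performs the same rearrangement $S_rT_rT_r^\dagger S_d^\dagger + S_dT_rT_r^\dagger S_r^\dagger \rightsquigarrow T_rT_r^\dagger(S_rS_d^\dagger + S_dS_r^\dagger)$ without naming the result it is using.
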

\begin{proof}
	Let $S=S_r+S_d\epsilon$ and $T=T_r+T_d\epsilon$ are in $\mathbb{S}^l$. Then $(S_rS_d^\dagger+S_dS_r^\dagger)=0$ and $(T_rT_d^\dagger+T_dT_r^\dagger)=0$. Now we  prove that $(ST)_r(ST)_d^\dagger+(ST)_d(ST)_r^\dagger=0$. Here 
	\begin{center}
		\hspace{-2cm}$ST=(S_r+S_d\epsilon)(T_r+T_d\epsilon)$\\\vspace{0.2cm}\hspace{-0.15cm}
		$=(S_rT_r+S_dT_d)+(S_rT_d+S_dT_r)\epsilon$\\\vspace{0.1cm}\hspace{0.4cm}
		$(ST)_r=(S_rT_r+S_dT_d) $ and $(ST)_d=(S_rT_d+S_dT_r)$\\\vspace{0.1cm}\hspace{-1.1cm}
			$(ST)_r^\dagger=(S_rT_r+S_dT_d)^\dagger=T_r^\dagger S_r^\dagger+T_d^\dagger S_d^\dagger$ \\\vspace{0.2cm}\hspace{-1cm}
			$(ST)_d^\dagger=(S_rT_d+S_dT_r)^\dagger=T_d^\dagger S_r^\dagger+S_d^\dagger T_r^\dagger$.
	\end{center} 
Therefore, 
\begin{center}
	\hspace{-8cm}	$(ST)_r(ST)_d^\dagger+(ST)_d(ST)_r^\dagger $\\\vspace{0.2cm}\hspace{3cm} 
		$=(S_rT_r+S_dT_d)(T_d^\dagger S_r^\dagger+S_d^\dagger T_r^\dagger)+(S_rT_d+S_dT_r)(T_r^\dagger S_r^\dagger+T_d^\dagger S_d^\dagger)$\\\vspace{0.2cm}\hspace{2.9cm} 
		$=S_rT_rT_d^\dagger S_r^\dagger+S_rT_rS_d^\dagger T_r^\dagger
		+S_dT_dT_d^\dagger S_r^\dagger+S_dT_dS_d^\dagger T_r^\dagger+
		S_rT_dT_r^\dagger S_r^\dagger$\\\vspace{0.2cm}\hspace{-0.6cm} 
		$+S_rT_dT_d^\dagger S_d^\dagger+
		S_dT_rT_r^\dagger S_r^\dagger+S_dT_rT_d^\dagger S_d^\dagger$ \\\vspace{0.2cm}\hspace{1.7cm} 
		$=(S_rT_rT_d^\dagger S_r^\dagger+S_rT_dT_r^\dagger S_r^\dagger)+(S_rT_rS_d^\dagger T_r^\dagger+S_dT_rT_r^\dagger S_r^\dagger)$\\\vspace{0.2cm}\hspace{1.8cm} 
		$+(S_dT_dT_d^\dagger S_r^\dagger+S_rT_dT_d^\dagger S_d^\dagger)+(S_dT_dS_d^\dagger T_r^\dagger+S_dT_rT_d^\dagger S_d^\dagger)$\\ \vspace{0.2cm}\hspace{0.4cm} 
		$=S_rS_r^\dagger(T_rT_d^\dagger +T_dT_r^\dagger)+T_rT_r^\dagger(S_rS_d^\dagger +S_d S_r^\dagger)$\\\vspace{0.2cm}\hspace{0.5cm} 
		$+T_dT_d^\dagger(S_d S_r^\dagger+S_r S_d^\dagger)+S_dS_d^\dagger (T_dT_r^\dagger +T_rT_d^\dagger)$\\
		\vspace{0.2cm}\hspace{1.4cm} 
		$\hspace{0.2cm}=0$,
		 for  $(S_rS_d^\dagger+S_dS_r^\dagger)=0$ and $(T_rT_d^\dagger+T_dT_r^\dagger)=0$.\end{center} 
	 Hence normed  $\mathbb{S}^l$ is closed under multiplication.
		\end{proof}

\begin{definition}
   	Let $G$ be a group, $KG$ be its group algebra and $A,B$ are two subsets of $G$. Then $(A,B)$ is called a multiplicative pair if it satisfies
   	\begin{center}
$\lVert ab \rVert=\lVert a \rVert \lVert b\rVert,\hspace{0.2cm}  \forall \hspace{0.2cm}a\in\hspace{0.1cm} $span$(A),\hspace{0.1cm} b\in$\hspace{0.1cm} span$(B)$. 		
   	\end{center}
\end{definition}
As an application, in order to find some multiplicative pair \cite{AQ} on sedenion-like algebra, it is very important to show that   $\lVert ST \rVert=\lVert S \rVert \lVert T\rVert$. 
\begin{thm}
	The norm defined in (1) satisfies  $\lVert ST \rVert=\lVert S \rVert \lVert T \rVert$, for all $S,T\in \mathbb{S}^l$.
\end{thm}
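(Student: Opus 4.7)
The plan is to reduce the sedenion-like norm computation to a pair of octonion-like computations by writing $S=S_r+S_d\epsilon$ and $T=T_r+T_d\epsilon$ and leveraging the two orthogonality conditions $S_rS_d^\dagger+S_dS_r^\dagger=0$ and $T_rT_d^\dagger+T_dT_r^\dagger=0$ built into the definition of $\mathbb{S}^l$. Using the product formula already established in the proof of Lemma~3, I would first record that $ST=(S_rT_r+S_dT_d)+(S_rT_d+S_dT_r)\epsilon$, so that $(ST)_r=S_rT_r+S_dT_d$ and $(ST)_d=S_rT_d+S_dT_r$, and therefore
\[
\lVert ST\rVert^2 \;=\; \lVert (ST)_r\rVert^2 + \lVert (ST)_d\rVert^2.
\]

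Next I would expand each summand via the octonion-like identity $\lVert X\rVert^2=(XX^\dagger)_0$ together with $(AB)^\dagger=B^\dagger A^\dagger$. This produces eight quartic monomials of the form $S_\ast T_\ast T_\ast^\dagger S_\ast^\dagger$. I would sort them into two natural groups: the four diagonal monomials in which the two middle factors share a $T$-index, and the four cross monomials whose middle $T$-indices are mismatched.

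For the diagonal group, I would pull out each central factor $T_rT_r^\dagger$ or $T_dT_d^\dagger$ via the centrality of $XX^\dagger$ supplied by Theorem~2, and then take scalar parts to obtain $\lVert T_r\rVert^2\lVert S_r\rVert^2+\lVert T_d\rVert^2\lVert S_d\rVert^2+\lVert T_d\rVert^2\lVert S_r\rVert^2+\lVert T_r\rVert^2\lVert S_d\rVert^2=(\lVert S_r\rVert^2+\lVert S_d\rVert^2)(\lVert T_r\rVert^2+\lVert T_d\rVert^2)=\lVert S\rVert^2\lVert T\rVert^2$. For the cross group, I would reassociate and collect to obtain $S_r(T_rT_d^\dagger+T_dT_r^\dagger)S_d^\dagger+S_d(T_rT_d^\dagger+T_dT_r^\dagger)S_r^\dagger$, which vanishes by the orthogonality hypothesis on $T_r,T_d$ — this is exactly the cancellation pattern already exploited in the closing lines of the proof of Lemma~3.

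The main obstacle I expect is keeping track of the $u_7$-component of each octonion-like element $X X^\dagger$: by Theorem~2, $XX^\dagger=\lVert X\rVert^2+c_Xu_7$ is central but not purely scalar, so the octonion-like norm is not by itself multiplicative. The resolution is to use centrality to slide these $c_Xu_7$ pieces outside the sandwich $S_\ast(\cdot)S_\ast^\dagger$ and then invoke the orthogonality of $S_r$ and $S_d$, which forces both the scalar and the $u_7$ parts of $S_rS_d^\dagger$ to vanish; once this secondary cancellation is verified the $c$-corrections drop out of the scalar part of the full product, and the diagonal and off-diagonal groups combine to give the desired identity $\lVert ST\rVert=\lVert S\rVert\lVert T\rVert$.
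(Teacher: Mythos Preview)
Your proposal follows essentially the same route as the paper's proof: write $S$ and $T$ as dual octonion-like numbers, invoke Lemma~3 (closure) to reduce $\lVert ST\rVert^{2}$ to the octonion-like expression $(S_rT_r+S_dT_d)(S_rT_r+S_dT_d)^{\dagger}+(S_rT_d+S_dT_r)(S_rT_d+S_dT_r)^{\dagger}$, expand into eight quartic monomials, kill the four cross terms via the orthogonality hypotheses $T_rT_d^{\dagger}+T_dT_r^{\dagger}=0$ and $S_rS_d^{\dagger}+S_dS_r^{\dagger}=0$, and identify the four surviving diagonal terms with $(\lVert S_r\rVert^{2}+\lVert S_d\rVert^{2})(\lVert T_r\rVert^{2}+\lVert T_d\rVert^{2})$. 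The only substantive difference is that you explicitly flag and propose to track the $u_7$-component of each $XX^{\dagger}$ via the centrality statement of Theorem~2, a point the paper's argument passes over silently when it writes $S_rT_rT_r^{\dagger}S_r^{\dagger}=\lVert S_r\rVert^{2}\lVert T_r\rVert^{2}$ directly.
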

\begin{proof}
To prove this, it is convenient to write $S$ and $T$ as dual octonion numbers. Let  
\begin{center}
	$S=s_0+s_1\lambda e_oe_1+s_2\lambda e_oe_2+s_3\lambda e_oe_3+s_4\lambda e_1e_2+s_5\lambda e_3e_1+s_6\lambda e_2e_3+s_7\lambda e_oe_1e_2e_3$\\\vspace{0.3cm}$+s_8\lambda e_oe_4+s_9\lambda e_1e_4+s_{10}\lambda e_2e_4+s_{11}\lambda e_3e_4+s_{12}\lambda e_oe_2e_1e_4+s_{13}\lambda e_oe_1e_3e_4$\\\vspace{0.3cm}\hspace{-6cm}$+s_{14}\lambda e_oe_3e_2e_4+s_{15}\lambda e_1e_2e_3e_4$, and

	$T=t_0+t_1\lambda e_oe_1+t_2\lambda e_oe_2+t_3\lambda e_oe_3+t_4\lambda e_1e_2+t_5\lambda e_3e_1+t_6\lambda e_2e_3+t_7\lambda e_oe_1e_2e_3$\\\vspace{0.3cm}$+t_8\lambda e_oe_4+t_9\lambda e_1e_4+t_{10}\lambda e_2e_4+t_{11}\lambda e_3e_4+t_{12}\lambda e_oe_2e_1e_4+t_{13}\lambda e_oe_1e_3e_4$\\\vspace{0.3cm}\hspace{-6cm}$+t_{14}\lambda e_oe_3e_2e_4+t_{15}\lambda e_1e_2e_3e_4$.
\end{center}
Then $S=S_r+S_d\epsilon$  and $T=T_r+T_d\epsilon$,  where $S_r,S_d,T_r,T_d$ are octonion-like numbers given by
\begin{center}
	$S_r=s_0+s_1\lambda e_oe_1+s_2\lambda e_oe_2+s_3\lambda e_oe_3+s_4\lambda e_1e_2+s_5\lambda e_3e_1+s_6\lambda e_2e_3+s_7\lambda e_oe_1e_2e_3$,\vspace{0.2cm}\\
	\hspace{-1cm}$S_d=-s_{15}+s_{14}\lambda e_oe_1+s_{13}\lambda e_oe_2+s_{12}\lambda e_oe_3+s_{11}\lambda e_1e_2+s_{10}\lambda e_3e_1+s_9\lambda e_2e_3$\\\vspace{0.2cm}\hspace{-8cm}$+s_8\lambda e_oe_1e_2e_3$ and

	\hspace{-0.35cm}$T_r=t_0+t_1\lambda e_oe_1+t_2\lambda e_oe_2+t_3\lambda e_oe_3+t_4\lambda e_1e_2+t_5\lambda e_3e_1+t_6\lambda e_2e_3+t_7\lambda e_oe_1e_2e_3$,\vspace{0.2cm}\\
	\hspace{-0.48cm}$T_d=-t_{15}+t_{14}\lambda e_oe_1+t_{13}\lambda e_oe_2+t_{12}\lambda e_oe_3+t_{11}\lambda e_1e_2+t_{10}\lambda e_3e_1+t_9\lambda e_2e_3+t_8\lambda e_oe_1e_2e_3$.
	\end{center}

\begin{center}
\hspace{-3cm}	Therefore $ST=(S_r+S_d\epsilon)(T_r+T_d\epsilon)=(S_rT_r+S_dT_d)+(S_rT_d+S_dT_r)\epsilon$,\\\vspace{0.2cm}\hspace{-0.6cm}
	$(ST)^\dagger =((S_rT_r+S_dT_d)+(S_rT_d+S_dT_r)\epsilon)^\dagger =(S_rT_r+S_dT_d)^\dagger+(S_rT_d+S_dT_r)^\dagger\epsilon$\\\vspace{0.2cm}\hspace{-8.8cm}
	$\lVert ST\rVert^2 =(ST)(ST)^\dagger$\\\vspace{0.2cm}\hspace{0.5cm}
	$=\lbrace(S_rT_r+S_dT_d)+(S_rT_d+S_dT_r)\epsilon\rbrace \lbrace(S_rT_r+S_dT_d)^\dagger+(S_rT_d+S_dT_r)^\dagger\epsilon\rbrace$.
	\\\vspace{0.2cm}\hspace{-0.65cm}
	$=\lbrace(S_rT_r+S_dT_d)(S_rT_r+S_dT_d)^\dagger+(S_rT_d+S_dT_r)(S_rT_d+S_dT_r)^\dagger\rbrace$	\\\vspace{0.2cm}\hspace{-0.1cm}
	$+\lbrace(S_rT_r+S_dT_d)(S_rT_d+S_dT_r)^\dagger+(S_rT_d+S_dT_r)(S_rT_r+S_dT_d)^\dagger\rbrace\epsilon$
\end{center}
In the above sum the coefficient of $\epsilon $ is zero by Lemma 2. Hence
\begin{center}
		\hspace{-2cm}$\lVert ST\rVert^2 =\lbrace(S_rT_r+S_dT_d)(S_rT_r+S_dT_d)^\dagger+(S_rT_d+S_dT_r)(S_rT_d+S_dT_r)^\dagger\rbrace$	\\\vspace{0.2cm}\hspace{0.25cm}
	$=	S_rT_rT_r^\dagger S_r^\dagger+S_rT_rT_d^\dagger S_d^\dagger+S_d T_dT_r^\dagger S_r^\dagger+S_dT_dT_d^\dagger S_d^\dagger+ S_rT_dT_d^\dagger S_r^\dagger+S_rT_dT_r^\dagger S_d^\dagger $	\\\vspace{0.2cm}\hspace{-6.5cm}
	$+T_rS_dT_d^\dagger S_r^\dagger+T_rS_dT_d^\dagger S_d^\dagger $.
	
\end{center}
In the above sum, second and sixth terms are canceled as $(T_rT_d^\dagger+T_dT_r^\dagger)=0$ and third and seventh terms are canceled as $(S_rS_d^\dagger+S_dS_r^\dagger)=0$, which implies that 
\begin{center}
		$\lVert ST\rVert^2 = 	S_rT_rT_r^\dagger S_r^\dagger+S_dT_dT_d^\dagger S_d^\dagger+ S_rT_dT_d^\dagger S_r^\dagger+T_rS_dT_d^\dagger S_d^\dagger$\\\vspace{0.2cm}\hspace{2.5cm}
		$=\lVert S_r\rVert^2\lVert T_r\rVert^2+\lVert S_d\rVert^2\lVert T_d\rVert^2+\lVert S_r\rVert^2\lVert T_d\rVert^2+\lVert S_d\rVert^2\lVert T_r\rVert^2$.
\end{center} 
Hence  $\lVert ST \rVert=\lVert S \rVert \lVert T \rVert$.
\end{proof}
It is clear from the multiplication table of sedenion-like algebra that $SS^\dagger$ for all $S\in \mathbb{S}^l$ contains only real units, which are commutative. Hence $S S^\dagger=S^\dagger S$ and $(S S^\dagger) T=T(S S^\dagger )$. 
\section{Hopf Algebra structure on Octonion-like and sedenion-like Algebra}
In this section we construct Hopf algebra structure on octonion like algebra. A Hopf algebra is a six-tuple $(H, \mu, \eta, \bigtriangleup, \epsilon, S)$, where $H$ is a vector space over $K$, the product  $\mu:H\otimes H\rightarrow H$ and unit $\eta: H\rightarrow k $ satisfies the commutativity of the following diagrams:\\\vspace{0.3cm}
\begin{tikzpicture}[node distance={27mm}, thick, main/.style = {draw, circle}]
	\node (1) {$H\otimes H\otimes h$}; 	
	\node (2) [right of=1] {$H\otimes H$};
	\node (3) [below of=1] {$H\otimes H$};
	\node (4) [below of=2]{$H$};
	\draw[->] (1) -- node[midway, above left, sloped, pos=8/9] {$id \otimes \mu$ } (2);
	\draw[->] (1) -- node[midway, below left, sloped, pos=2/3] {$\mu \otimes id$} (3);
	\draw[->] (2) -- node[midway, above  , sloped, pos=1/2] {
			{$\mu$} } (4);
	\draw[->] (3) -- node[midway, above left , sloped, pos=1/2] {$\mu$} (4);
	
	\node (a) [right of=2]{$K\otimes H$}; 	
	\node (b) [right of=a] {$H\otimes H$};
	\node (c) [right of=b] {$K\otimes H$};
	\node (d) [below of=b]{$H$};
	\draw[->] (a) -- node[midway, above left, sloped, pos=1] {$\eta \otimes id$ } (b);
	\draw[->] (a) -- node[midway, below left, sloped, pos=2/3] {}(d);
	\draw[->] (b) -- node[midway, above , sloped, pos=1/2] {
			$\mu$ } (d);
	\draw[->] (c) -- node[midway, above , sloped, pos=1/2] {} (d);
	\draw[->] (c) -- node[midway, above , sloped, pos=1/2] {$id \otimes \eta $} (b);
\end{tikzpicture}\vspace{0.3cm}
The coproduct	$\bigtriangleup : H \rightarrow H \otimes H$ and counit $\epsilon: K \rightarrow H$ are such that the following diagrams commutes.

\begin{tikzpicture}[node distance={26mm}, thick, main/.style = {draw, circle}]
	\node (1) {$ C$}; 	
	\node (2) [right of=1] {$C\otimes C$};
	\node (3) [below of=1] {$C\otimes C$};
	\node (4) [below of=2]{$C$};
	\draw[->] (1) -- node[midway, above left, sloped, pos=2/3] {$\bigtriangleup$ } (2);
	\draw[->] (1) -- node[midway, above left, sloped, pos=2/3] {
			$\bigtriangleup $ } (3);
	\draw[->] (2) -- node[midway, above  , sloped, pos=1/2] {
			$id \otimes \bigtriangleup $ } (4);
	\draw[->] (3) -- node[midway, above , sloped, pos=1/2] {
		$\bigtriangleup \otimes id $} (4);
	\node (5) [right of=2]{$\mathbb{F}\otimes C$}; 	
	\node (6) [right of=5] {$C\otimes C$};
	\node (7) [right of=6] {$\mathbb{F}\otimes C$};
	\node (8) [below of=6]{$C$};
	\draw[->] (6) -- node[midway, above left, sloped, pos=1/40] {$\epsilon \otimes id$ } (5);
	\draw[->] (8) -- node[midway, below left, sloped, pos=2/3] {}(5);
	\draw[->] (8) -- node[midway, above , sloped, pos=1/2] {
			$\bigtriangleup $ } (6);
	\draw[->] (8) -- node[midway, above , sloped, pos=1/2] {} (7);
	\draw[->] (6) -- node[midway, above , sloped, pos=1/2] {$id \otimes \epsilon $} (7);
\end{tikzpicture}\\\vspace{0.3cm}
The antipode $S:H\rightarrow H$, satisfies the commutativity of following diagram:\vspace{0.3cm} 
\begin{center}
	\begin{tikzpicture}[node distance={30mm}, thick, main/.style = {draw, circle}]
		\node (1) {$A\otimes A$}; 	
		\node (2) [right of=1] {$A$};
		\node (3) [below of=1] {$A\otimes A$};
		\node (4) [below of=2]{$A$};
		\node (5) [right of=2] {$A\otimes A$};
		\node (6) [right of=4] {$A\otimes A$};
		\draw[->] (1) -- node[midway, above left, sloped, pos=2/3] {$\mu$ } (2);
		\draw[->] (3) -- node[midway, above left, sloped, pos=2/3] {$id \otimes S$} (1);
		\draw[->] (4) -- node[midway, above , sloped, pos=1/2] {$\eta \otimes \epsilon $} (2);
		\draw[->] (4) -- node[midway, above , sloped, pos=1/2] {$\bigtriangleup$} (3);
		\draw[->] (5) -- node[midway, above , sloped, pos=1/2] {$\mu$} (2);
		\draw[->] (4) -- node[midway, above , sloped, pos=1/2] {$\bigtriangleup$} (6);
		\draw[->] (6) -- node[midway, above , sloped, pos=1/2] {$S\otimes id$} (5);
	\end{tikzpicture}
\end{center}
Theorem 1 gives the existence of inverse of basis elements in  $\mathbb{O}^l$, which is used in the existence of antipode on $\mathbb{O}^l$.

	\begin{remark}
	All basis elements $u_i,\hspace{0.1cm} i=0,1,2,3,4,5,6,7$ in $\mathbb{O}^l$ have unique inverse $\mathbb{O}^l$.
	\end{remark}
\begin{proof}
	Since for all basis element $u_i, i=0,1,2,3,4,5,6,7, \lVert u_i\rVert_1=1$ and $\lVert u_i\rVert_2=1$. Therefore both norms are non-zero for all basis elements $u_i$, hence inverse of $u_i$ exists, by Theorem 1. 
\end{proof}
Now we are ready to define the Hopf algebra structure on octonion like algebra $\mathbb{O}^l$. The octonion-like algebras is closed under multiplication and has unit as given in the multiplication table.  Define the coproduct $\bigtriangleup(x):\mathbb{O}^l\rightarrow \mathbb{O}^l\otimes \mathbb{O}^l$ and  counit $\epsilon:\mathbb{O}^l\rightarrow \mathbb{R}$ by
\begin{center}
	$\bigtriangleup(x)=\bigtriangleup(\sum a_iu_i)=\sum a_i(u_i\otimes u_i)$, \hspace{1cm} $\epsilon(u_i)=1$,
\end{center}
If  $x =\sum a_iu_i\in \mathbb{O}^l$, then  
$(id\otimes \bigtriangleup)\bigtriangleup(x)=(id\otimes \bigtriangleup)\bigtriangleup(\sum a_iu_i )=(id\otimes \bigtriangleup)(\sum a_i(u_i\otimes u_i )) =\sum a_i(u_i\otimes( u_i\otimes u_i)) = a_i((u_i\otimes u_i)\otimes u_i) =(\bigtriangleup\otimes id)(a_i(u_i\otimes u_i))=(\bigtriangleup\otimes id)\bigtriangleup( \sum a_iu_i)=(\bigtriangleup\otimes id)\bigtriangleup(x)$, hence $ (id\otimes \bigtriangleup)\bigtriangleup(u_i)=(\bigtriangleup\otimes id)\bigtriangleup$, and $(\epsilon\otimes id)\bigtriangleup(x)=(\epsilon\otimes id)\bigtriangleup(\sum a_iu_i)(\epsilon\otimes id)(a_i(u_i\otimes u_i))=a_i(1\otimes u_i)$, similarly $(id\otimes \epsilon)\bigtriangleup(x)=(id\otimes \epsilon)\bigtriangleup(\sum a_iu_i)(id\otimes \epsilon)(a_i(u_i\otimes u_i))=a_i(u_i\otimes 1)$. Since $u_1\otimes 1=1\otimes u_1\in \mathbb{O}^l$ implies that $(\epsilon\otimes id)\bigtriangleup= (id\otimes \epsilon)\bigtriangleup$
which proves that it is a coalgebra. Define the antipode $S$ as
\begin{center}
	$S(x)=S(\sum a_iu_i)=\sum a_iu_i^{-1}$,\end{center}  for  all $x\in  \mathbb{O}^l$, where $x=\sum a_iu_i$ and $u_i^{-1}$ (inverse of $u_i$) exists by Corollary 3.1. Now we prove that $id*S=	S*id$ for basis elements of $\mathbb{O}^l$ and apply linearity to get the result for general elements. For any basis element $u_i\in \mathbb{O}^l$, 
\begin{center}
	$id*S(u_i)=\mu\circ(id\otimes S)\circ \bigtriangleup(u_i)=\mu\circ(id\otimes S)(u_i\otimes u_i)=\mu(u_i\otimes u_i^{-1})=1=\eta\circ\epsilon(u_i)$, \vspace{0.2cm}
	$S*id(u_i)=\mu\circ(S\otimes id)\circ \bigtriangleup(u_i)=\mu\circ(S\otimes id)(u_i\otimes u_i)=\mu(u_i^{-1}\otimes u_i)=1=\eta\circ\epsilon(u_i) $.
\end{center}  
 Thus $id*S=S*id=\eta\circ\epsilon$, hence $\mathbb{O}^l$ is a Hopf algebra. For sedenion-like algebra define $\bigtriangleup, \epsilon, S$ same as for octonion-like algebra. Here existence of inverse of basis element is given by: 
	$$
u_i^{-1}=\begin{cases}
	\hspace{0.3cm}u_i~; & \text{if $u_i$ is real unit }\\
	\hspace{0.3cm} u_i^3~; & \text{if $u_i$ is imaginary unit}
\end{cases}
$$   

\section{$\mathbb{Z}_2^n$-Graded Quasialgebra structure on Octonion-like and sedenion-like Algebra}
	Group graded quasialgebra \cite{HG} structure on octonion like algebras helps to find some multiplicative pairs on it. Therefore it is useful to construct the group graded quasialgebra structure on $\mathbb{O}^l$.  In this section, we give a quasi-algebraic structure on octonion like algebra $\mathbb{O}^l$. A group$(G)$-graded vector space means a vector space $V$ which can be written as
$V=\oplus_{g\in G}V_g$ and $V_g\cdot V_h\subseteq V_{gh}$ for $g, h\in G$, the elements of $ V_g $ are called homogeneous elements of degree g, denoted by $|v|$. A normalized $3$-cocycle on $G$ is a map $\phi:G\otimes G\otimes G\rightarrow K^*$ satisfying the following conditions:   
\begin{center}
	$\phi(ab,c,d)\phi(a,b,cd)=\phi(a,b,c)\phi(a,bc,d)\phi(b,c,d)$\\
	\vspace{0.2cm}
	$\hspace{0.35cm}\phi(a,e,b)=1,\hspace{0.2cm} \forall \hspace{0.2cm} a,b,c,d\in G $.
\end{center}
A $G$-graded quasialgebra is a $G$-graded vector space
$V$, a product map $V\otimes V\rightarrow V$ preserving the total degree and associative in the sense that
\begin{center}
	$(u\cdot v)\cdot w=u\cdot(v\cdot w)\phi(|u|,|v|,|w|),\hspace{0.1cm} \forall\hspace{0.1cm} u,v,w\in V$.
\end{center}
Let $x=(x_1,~x_2,~....~x_n)\in \mathbb{Z}_2^n$, define a group homomorphism $f:\mathbb{Z}_2^n\rightarrow \mathbb{Z}_2$ by $f(x)=\sum_{i=1}^{n} x_i$, then $Ker(f)$ forms a subgroup of $\mathbb{Z}_2^n$, denoted by $\mathbb{Z}_2^n/2$ and called the even subgroup of $\mathbb{Z}_2^n$. 
\begin{example}
	 Even subalgebra of $\mathbb{Z}_2^4$ is $\mathbb{Z}_2^4/2=\{(0,0,0,0),\hspace{0.2cm} (0,0,1,1),\hspace{0.2cm}(0,1,0,1),\vspace{0.1cm}$\\$ (0,1,1,0),\hspace{0.2cm}(1,0,0,1),\hspace{0.2cm}(1,0,1,0),\hspace{0.2cm}(1,1,0,0),\hspace{0.2cm}(1,1,1,1)\}$.  
Then  $\mathbb{Z}_2^4/2$-grading on\vspace{0.1cm} octonion like algebra is given by:
\begin{center} $|u_0|=(0,0,0,0);\hspace{1cm}|u_1|=(0,0,1,1)$;\\\vspace{0.2cm}$|u_2|=(0,1,0,1);\hspace{1cm}|u_3|=(0,1,1,0)$;\\\vspace{0.2cm}$|u_4|=(1,0,0,1);\hspace{1cm}|u_5|=(1,0,1,0)$;\\\vspace{0.2cm}$|u_6|=(1,1,0,0);\hspace{1cm}|u_7|=(1,1,1,1)$.
\end{center}
Clearly, the above grading satisfies that if $u_iu_j=u_k$, then $|u_i|+|u_j|=|u_k|$ for all $u_i,u_j,u_k\in \mathbb{O}^l$. Assuming that $\phi(|u_i|,|u_j|,|u_k|)=1$, we get  
\begin{center}
	$(u_iu_j)u_k=u_i(u_ju_k)\phi(|u_i|,|u_j|,|u_k|),\hspace{0.1cm} \forall\hspace{0.1cm} u,v,w\in V$.
\end{center}
\end{example} 
\begin{example}
	 $\mathbb{Z}_2^5/2$-grading quasialgebra structure on sedenion like algebra is given by:\begin{center}
 $|1|=(0,0,0,0,0);\hspace{3cm}|e_0e_1|=(0,0,0,1,1)$;
\vspace{0.2cm}\\$|e_0e_2|=(0,0,1,0,1);\hspace{2.6cm}|e_0e_3|=(0,1,0,0,1)$;
\vspace{0.2cm}\\$|e_1e_2|=(0,0,1,1,0);\hspace{2.6cm}|e_3e_1|=(0,1,0,1,0)$;
\vspace{0.2cm}\\$|e_0e_4|=(1,0,0,0,1);\hspace{2.6cm}|e_1e_4|=(1,0,0,1,0)$;
\vspace{0.2cm}\\$|e_2e_4|=(1,0,1,0,0);\hspace{2.7cm}|e_3e_4|=(1,1,0,0,0)$;
\vspace{0.2cm}\\\hspace{0.6cm}$|e_2e_3|=(0,1,1,0,0);\hspace{2.6cm}|e_0e_1e_2e_3|=(0,1,1,1,1)$;
\vspace{0.2cm}\\\hspace{0.6cm}$|e_0e_2e_1e_4|=(1,0,1,1,1);\hspace{2.1cm}|e_0e_1e_3e_4|=(1,1,0,1,1)$;
\vspace{0.2cm}\\\hspace{0.5cm}$|e_0e_3e_2e_4|=(1,1,1,0,1);\hspace{2.1cm}|e_1e_2e_3e_4|=(1,1,1,1,0)$,
\end{center}
 The associative of sedenion-like algebra implies that $\phi=1$, which preserves the condition  	$(u_iu_j)u_k=u_i(u_ju_k)\phi(|u_i|,|u_j|,|u_k|)$. Hence the sedenion-like algebra is a $\mathbb{Z}_2^5$-graded quasialgebra. 
\end{example}

	\vspace{0.2cm}\noindent\textbf{Acknowledgment}: 
	Authors thanks Prof. Hua-Lin Huang for stimulating discussion and useful comments. The first author sincerely thanks the Council of Scientific and Industrial Research (CSIR), Government of India for research fellowship (SRF: 09/1032 (0024)/2020-EMR-I).


\begin{thebibliography}{9}
		\justifying
		
		
		\bibitem{AC} Albuquerque, Helena; Majid, Shahn: Clifford algebras obtained by twisting of group algebras, Journal of Pure and Applied Algebra \textbf{171}, 133-148 (2002).
		
		\bibitem{AQ} Albuquerque, Helena;  Majid,  Shahn: Quasialgebra Structure of the Octonions, Journal of Algebra \textbf{220} , 188-224 (1999).
		
		\bibitem{BD} Balodi, Mamta;  Huang, Hua-Lin;  Kumar, Shiv Datt: Diamond  Lemma for Group Graded Quasi-algebras, Proc. Mathematical Science (Indian Academy of Science), \textbf{126} , 341-352 (2016).
		
		\bibitem{BC} Balodi, Mamta; Huang, Hua-Lin; Kumar, Shiv Datt: On the Classification of Finite Quasi-quatum Groups, Reviews in Mathematical Physics \textbf{29}, 1730003 (2017).
		
		\bibitem{BO} Baez, John C.: The Octonion, Bulletin (New Series) of American Mathematical Society \textbf{39}, 145-205 (2001).
		
		\bibitem{CE} Christian, Joy: Eight-dimensional Octonion-like but Associative Normed Division Algebra, Einstein Center for Local-Realistic Physics, United Kingdom, 2022, arXiv:1908.06172v10.
		
		\bibitem{DGC} Dorst, Leo; Fontijne, Daniel; Mann, Stephen: Geometric Algebra for Computer Science, Elsevier, Amsterdam, 2007.
		
		\bibitem{DGP} Doran, Chris; Lasenby, Anthony: Geometric Algebra for Physics, Cambridge University Press, 2003. 
		
		\bibitem{HB}  Huang, Hua-Lin; Liu, Gongxiang; Ye, Yu: The Braided Monoidal Structures on a Class of Linear Gr-categories, American Mathematical Society, Algebr. Represent \textbf{17}, 1249-1265 (2014).
		
		\bibitem{HG} Hu, Ya-Qing; Huang, Hua-Lin; Zhang, Chi: $\mathbb{Z}_2^n$-graded quasialgebras and the Hurwitz problem on composition of quadratic forms, Transactions of the American Mathematical Society \textbf{370}, 241-263 (2018).
		
		\bibitem{KO}   Khalid, Juhi; Bouchard, Martin: On the Octonion-like Associative Division Algebra, School of Electrical Engineering and Computer Science, University of Ottawa, 800 King Edward, Ottawa K1N6N5, Canada, 	arXiv:2012.11359. 
		
		\bibitem{KQ}  Kassel, Christian: Quantum groups, Springer-Verlag, New York, 1995.
		
		\bibitem{LA3} Lal, Ramji: Algebra 3, Springer, 2020. \vspace{0.1cm}
		
		\bibitem{SH} Sweedler, Moss E.:  Hopf Algebra, Cambridge Univ. Press, Cambridge, 1980.
		
		\bibitem{YO}   Yuzvinsky, Sergey: Orthogonal pairings of Euclidean spaces, Michigan Math. Journal $\textbf{28}$, 131-145  (1981).
		
		
	\end{thebibliography}
\end{document}